\documentclass[pdflatex,sn-mathphys-num]{sn-jnl}

\usepackage{graphicx}%
\usepackage{multirow}%
\usepackage{amsmath,amssymb,amsfonts,mathrsfs}%
\usepackage{amsthm}%
\usepackage{amssymb}%
\usepackage{mathrsfs}%
\usepackage[title]{appendix}%
\usepackage{xcolor}%
\usepackage{textcomp}%
\usepackage{manyfoot}%
\usepackage{booktabs}%
\usepackage{algorithm}%
\usepackage{algorithmicx}%
\usepackage{algpseudocode}%
\usepackage{listings}%

\usepackage{tikz-cd}
\usepackage{mathrsfs}
\usepackage[all,cmtip]{xy}
\usepackage{amsmath}
\usepackage{amssymb}

\newcommand{\sDn}{\mathscr{D} _n}
\newcommand{\Cnz}{\mathbb{C}^n _z}
\newcommand{\D}{\mathbb{D}}
\newcommand{\C}{\mathbb{C}}
\newcommand{\uw}{\underline{w}}
\newcommand{\Cnzs}{\mathbb{C}^n _{z,s}}

\theoremstyle{thmstyleone}%
\newtheorem{theorem}{Theorem}
\newtheorem{lemma}{Lemma}%
\newtheorem{corollary}{Corollary}%
\theoremstyle{thmstyletwo}%

\theoremstyle{thmstylethree}%
\newtheorem{definition}{Definition}%

\begin{document}

\title[Article Title]{On the automorphisms of Pick bodies}


\author*[1,2]{\fnm{Anindya} \sur{Biswas}}\email{anindyab132652@gmail.com}

\affil*[1]{\orgdiv{Department of Mathematics and Statistics}, \orgname{Masaryk University}, \orgaddress{\city{Brno}, \postcode{61137}, \country{Czechia}}}


\abstract{We determine the biholomorphic automorphisms of the Pick body associated with finitely many distinct points in a Carath\'eodory hyperbolic domain. Every automorphism is the composition of a common disc automorphism applied to all coordinates and a coordinate permutation preserving the Pick body. The proof uses the quotient Banach algebra of interpolation data, its generalized von Neumann inequality, and the fact that its symmetric part consists precisely of the constant vectors. For nodes in a polydisc, we introduce minimal effective coordinate tuples and identify the admissible permutations with the automorphisms of a lower-dimensional polydisc that preserve the projected node set. This yields a direct-product description of the automorphism group, with one factor equal to the automorphism group of the disc and the other finite. We also give the resulting descriptions for nodes in the disc and the bidisc.}

\keywords{Pick body, biholomorphic automorphism, quotient Banach algebra, holomorphic retract}
\pacs[Mathematics Subject Classification (2020)]{Primary 32M18; Secondary 32E30, 46J15}



\maketitle

\section{Introduction}\label{sec:introduction}

Finite interpolation by bounded holomorphic functions associates a geometric object with each finite set of nodes: the collection of their possible value vectors subject to a uniform norm bound. Its interior is the Pick body. This construction records the interpolation problem as the unit ball of a finite-dimensional quotient Banach algebra. The geometry of that ball therefore reflects both the analytic restrictions on interpolation and the algebraic structure of the space of data. A natural question is whether its biholomorphic symmetries can be described in terms of these two structures.

The work of Cole, Lewis and Wermer places interpolation bodies in the general framework of uniform algebras. Their study of Pick conditions and von Neumann inequalities \cite{CLW1992} links finite interpolation to quotient norms and to the geometry of hyperconvex sets. In \cite{CLW1993}, they characterize the Pick bodies associated with finite subsets of the disc by combining hyperconvexity with a boundary condition. Cole and Wermer \cite{CW1993} further develop the connection with representations of uniform algebras on Hilbert space. Their subsequent papers investigate interpolation bodies for finite node sets in the bidisc \cite{CW1996} and establish algebraic and real-analytic properties of their boundaries \cite{CW1997}. More recently, the current author \cite{Biswas2025} associates invariant functions with Pick bodies that encode interpolation solvability and generalize the Carath\'eodory pseudodistance.

The generalized von Neumann inequality is particularly useful in this passage from interpolation to geometry. In the present setting it states that a polynomial evaluated at any finite collection of elements of the closed unit ball of the interpolation algebra has norm at most the supremum of that polynomial on the corresponding closed polydisc. The inequality follows by lifting interpolation data to bounded holomorphic functions and then applying the scalar supremum estimate. For polynomials bounded by one on the polydisc, it says that the closed interpolation body is stable under polynomial evaluation: the property called hyperconvexity in this setting. Thus it expresses an analytic constraint on interpolation in the intrinsic language of the quotient algebra. The relationship between these inequalities and Pick conditions is central to the development described in \cite{CLW1992,CLW1993,CW1993}. Here its one-variable consequence allows us to use Arazy's structure theory for isometries \cite{Araz94}. This is the step that turns a linear symmetry of the Pick body into an algebraic symmetry.

The other ingredient is the symmetric part of a Banach space, defined through the complete holomorphic vector fields on its open unit ball. Arazy and Solel \cite{ArazSol90} explain how this structure relates holomorphic automorphisms to isometries and Jordan algebra operations. We show that the symmetric part of an interpolation algebra over a connected domain consists exactly of the constant vectors. Consequently, the orbit of the origin is the diagonal disc. Composing an arbitrary automorphism with a diagonal M\"obius transformation then produces an automorphism fixing the origin, to which Cartan's linearity theorem applies (see \cite{GarciaPalacios}). The Banach algebra background for this approach is developed in \cite{BonDun73}.

Our first result shows that every automorphism of a Pick body is obtained by applying the same automorphism of the disc to every coordinate and then permuting the coordinates by a permutation preserving the body. Both factors are uniquely determined. In particular, the automorphism group is the direct product of the disc automorphism group and a finite group of coordinate permutations. The conclusion applies to any Carath\'eodory hyperbolic domain and does not require an explicit criterion for solving its finite interpolation problems.

For nodes in a polydisc, the finite factor admits a geometric description. We call a coordinate tuple effective if the nodes lie on the graph of a holomorphic map over those coordinates, and we choose such a tuple of smallest possible length. The Pick body is unchanged when one passes to the projected nodes. The fixed-point theory of holomorphic maps on bounded convex domains \cite{Abate89}, together with the description of holomorphic retracts of the polydisc due to Heath and Suffridge \cite{HeathSuff81}, ensures that every holomorphic selfmap of the smaller polydisc fixing all projected nodes is the identity. Norm-preserving extensions of bounded holomorphic functions and their relation to retracts in the tridisc are studied by Kosi\'nski and McCarthy \cite{KosMcCar20}. We use this rigidity property to identify the admissible coordinate permutations with the automorphisms of the smaller polydisc preserving the projected node set.

Section~\ref{sec:results} contains the mathematical development. We first establish the general automorphism theorem. We then prove the polydisc description using minimal effective tuples and conclude with the disc and bidisc cases.

\section{Automorphisms of Pick bodies}\label{sec:results}

Suppose that $\Omega$ is a Carath\'eodory hyperbolic domain, that is, the Banach algebra $H^\infty (\Omega)$ of all bounded holomorphic functions separates the points of $\Omega$. We fix mutually distinct points $z_1,\ldots, z_n$ in $\Omega$ ($n\geq 2$) and we consider the Pick body 

\begin{align}\label{PickBody}
	\mathscr{D}_{\Omega}(\underline{z})&=\mathscr{D}_n\\
	&=\{(f(z_1),\ldots,f(z_n)):f\in H^\infty (\Omega), ||f||<1\}\notag
\end{align}
where $\underline{z}$ stands for the tuple $(z_1,\ldots,z_n)$. It is a convex balanced domain the Euclidean space $\mathbb{C}^n$. Let $\mu_{\Omega,\underline{z}}=\mu_n$ denote its Minkowski function. It is known that $(\mathbb{C}^n,\mu_n)$ is a finite dimensional Banach algebra with unit $\mathbf{1}=(1,\ldots,1)$ and it will be denoted by $\mathbb{C}^n _{\underline{z}}$. The domain (\ref{PickBody}) is the open unit ball of $\Cnz$. We will study the automorphism group $Aut(\sDn)$, that is, the collection of biholomorphic maps on $\sDn$. We denote its Lie algebra by $aut(\sDn)$. It is the collection of all completely integrable holomorphic vector fields on $\sDn$, that is, an $h\in \mathcal{O}(\sDn,\Cnz)$ (all holomorphic maps from $\sDn$ to $\Cnz$) is in $aut(\sDn)$ if and only if there is a one-parameter subgroup $\{\varphi_t\}_{t\in \mathbb{R}}$ of $Aut(\sDn)$ such that
\begin{align*}
	\frac{\partial}{\partial t}\varphi_t (\uw)=h(\varphi_t (z)),\,\,\text{and}\,\,\varphi(0,\uw)=\varphi_0 (\uw)=\uw\,\,\text{for all}\,\,\uw\in \sDn.
\end{align*}

By \cite{Biswas2025}, for each $a$ in the open unit disc $\D$ and $\theta\in \mathbb{R}$, the automorphism $\varphi(\lambda)=e^{i\theta}\frac{a+\lambda}{1+\overline{a}\lambda}$ of $\D$ gives an element $\varPhi_\varphi$ of $Aut(\sDn)$ which is defined by $\varPhi_\varphi(w_1,\ldots,w_n)=(\varphi(w_1),\ldots,\varphi(w_n))$.

Consider the map $\varphi:\mathbb{R}\times\sDn\rightarrow\sDn$ given by 
\begin{align}\label{IntCurv}
	\varphi(t,\underline{w})&=\varphi_t (\underline{w})\\
	&=(tanh(t) +\uw)(1+tanh(t )\uw)^{-1}.
\end{align}
The map (\ref{IntCurv}) is the integral curve of the vector field $h_{\mathbf{1}}(\uw)=\mathbf{1}-\uw ^2$ (See Proposition 2.2 in \cite{Araz94}, also Section 50 in \cite{BonDun73}). Since $\Cnz$ is isometrically isomorphic to $H^\infty (\Omega)/I(\underline{z})$ ($I(\underline{z})$ is the collection of all functions in $H^\infty (\Omega)$ vanishing at $z_1,\ldots, z_n$), it satisfies a multi-variable generalized version of the von Neumann inequality (See Theorem 4.5 in \cite{Araz94}). From this and Proposition 2.2. $(ii)$ in \cite{Araz94}, we conclude that $h_\mathbf{1}\in aut(\sDn)$. Now we look at the symmetric part $\Cnzs$ of $\Cnz$
\begin{align*}
	\Cnzs=aut(\sDn)(\mathbf{0})=\{h(\mathbf{0}):h\in aut(\sDn)\}.
\end{align*}
See \cite{ArazSol90} for more details. In particular, we get $h_\mathbf{1}(\mathbf{0})=\mathbf{1}\in \Cnzs$.

An element $\uw\in \Cnz$ is called hermitian if $\mu_{n}(exp(it\uw))=1$ for all $t\in \mathbb{R}$, that is $exp(it\uw)\in \partial\sDn$ for all $t\in \mathbb{R}$. Let us denote the real vector space of all hermitian elements of $\Cnz$ by $H_{\underline{z}}$. Since $\mathbf{1} \in  \Cnzs$, Proposition 2.4 in \cite{Araz94} gives $H_{\underline{z}}+i H_{\underline{z}}=\Cnzs$. We now find $H_{\underline{z}}$.

\begin{lemma}\label{SymmetricPart}
	$H_{\underline{z}}=\mathbb{R}\cdot \mathbf{1}$ and $\Cnzs=\C\cdot \mathbf{1}$.
\end{lemma}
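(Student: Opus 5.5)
Since $\C^n_{\underline z,s}=H_{\underline z}+iH_{\underline z}$ by Proposition 2.4 in \cite{Araz94}, the second assertion follows from the first. It also suffices to prove $H_{\underline z}\subseteq\mathbb{R}\cdot\mathbf{1}$. The reverse inclusion is immediate: for $r\in\mathbb{R}$ we have $\exp(itr\mathbf{1})=e^{itr}\mathbf{1}$, whose $\mu_n$-norm is $1$, because the constant function $e^{itr}$ has sup norm $1$ and any function of norm $<1$ has all values of modulus $<1$.

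Take a hermitian element $\uw=(w_1,\ldots,w_n)$. I will show that each $w_j$ is real and that all the $w_j$ coincide. The algebra $\Cnz$ carries the coordinatewise product, so $\exp(it\uw)=(e^{itw_1},\ldots,e^{itw_n})$.

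\emph{Step 1: the coordinates are real.} Every coordinate functional $\uw\mapsto w_j$ has norm at most $1$ on $\Cnz$, since $|f(z_j)|\le\|f\|$. Hence $|e^{itw_j}|=e^{-t\,\mathrm{Im}\,w_j}\le\mu_n(\exp(it\uw))=1$ for all $t\in\mathbb{R}$. Letting $t\to\pm\infty$ forces $\mathrm{Im}\,w_j=0$.

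\emph{Step 2: the coordinates are equal.} This is the main step. I would avoid Lie-theoretic arguments and use the Pick/Schwarz–Pick obstruction directly. Suppose $w_1\neq w_2$, say, and put $\delta=w_1-w_2\neq0$. For every $t$, $\mu_n(\exp(it\uw))=1$ means that for each $\varepsilon>0$ there is $f\in H^\infty(\Omega)$ with $\|f\|\le1+\varepsilon$, $f(z_1)=e^{itw_1}$ and $f(z_2)=e^{itw_2}$.

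Take the Möbius (pseudohyperbolic) distance $\rho$ on the closed disc, with unimodular values allowed after normalizing by $1+\varepsilon$. The function $g=f/(1+\varepsilon)$ maps $\Omega$ into $\D$. Since $\Omega$ is Carathéodory hyperbolic and $z_1\neq z_2$, the Möbius–Carathéodory distance satisfies $c^*_\Omega(z_1,z_2)=:c<1$. By the Schwarz–Pick lemma, $\rho(g(z_1),g(z_2))\le c$.

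The points $g(z_j)=e^{itw_j}/(1+\varepsilon)$ lie at radius $r=1/(1+\varepsilon)\to1$. For $r$ close to $1$, their pseudohyperbolic distance tends to $1$ whenever $e^{itw_1}\neq e^{itw_2}$. Concretely, for distinct points $ru$ and $rv$ with $|u|=|v|=1$,
\[
1-\rho^2=\frac{(1-r^2)^2}{|1-r^2\bar u v|^2}\longrightarrow0 .
\]
Now choose $t$ with $e^{it\delta}\neq1$, for instance $t=\pi/\delta$. Letting $\varepsilon\to0$ then gives $\rho\to1>c$, a contradiction. Hence all $w_j$ coincide and are real, so $\uw\in\mathbb{R}\cdot\mathbf{1}$.

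The delicate point is the approximation in Step 2. The norm $\mu_n$ is attained only as an infimum, so I work with $1+\varepsilon$ and pass to the limit as above. This needs care only to make the Schwarz–Pick bound uniform in $\varepsilon$, and it is, since $c$ depends only on $z_1,z_2$. Alternatively, one can invoke compactness (a normal families argument) to obtain an $f$ with $\|f\|=1$ exactly. Then $f$ is nonconstant, because its values at $z_1$ and $z_2$ differ, and it attains modulus $1$ in the interior, which contradicts the maximum principle directly.
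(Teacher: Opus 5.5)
Your proof is correct and follows the same route as the paper. The $w_j$ are real because $\partial\sDn\subset\overline{\D}^n$, and then unimodular data can be interpolated with norm at most one only by constants; the paper merely asserts this, and you make it precise via the Schwarz--Pick/Carath\'eodory bound (or via normal families and the maximum principle). One small correction: the bound $c^*_\Omega(z_1,z_2)<1$ comes from the connectedness of $\Omega$ (e.g.\ Montel plus the maximum principle), not from Carath\'eodory hyperbolicity, which only gives $c^*_\Omega(z_1,z_2)>0$.
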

\begin{proof}
	It is clear from the definition of hermitian elements and of $\partial\sDn$ that $\mathbb{R}\cdot \mathbf{1}\subset H_{\underline{z}}$.  Now suppose $\uw=(w_1,\ldots,w_n)\in \Cnz$ and $exp(it\uw)=(exp(itw_1),\ldots, exp(itw_n))\in \partial\sDn$ for all $t\in \mathbb{R}$. Since, $\partial\sDn$ is contained in the closed polydisc $\overline{\D}^n$, we conclude that each $w_j$ is real and hence each $exp(itw_j)$ is unimodular. Since such data can be interpolated only by constant functions we conclude $exp(itw_1)=\ldots=exp(itw_n)$ for all $t\in \mathbb{R}$. This is sufficient to conclude the proof of the lemma.
\end{proof}

The symmteric part of $\sDn$ (see \cite{ArazSol90}) is defined to be $\mathscr{D}_{n,s}=\sDn\cap \Cnzs$. Thus $\mathscr{D}_{n,s}=\{\lambda\cdot\mathbf{1}:|\lambda|<1\}$.

By Theorem 7 at page $272$ in \cite{BonDun73} (more precisely, by the proof of it), there is a Hilbert space $\mathcal{H}$ such that $\Cnz$ is a closed unital subalgebra of $B(\mathcal{H})$ (space of all bounded linear operators on $\mathcal{H}$). Therefore we may use Corollary 2.9 and Theorem 2.2.$(ii)$ in \cite{ArazSol90} to conclude that $Aut(\sDn)(\mathbf{0})=\{\varPhi(\mathbf{0}):\varPhi\in Aut(\sDn) \}=\{\lambda\cdot\mathbf{1}:|\lambda|<1\}$. With this in our hand, we can now give a description of $Aut(\sDn)$.

\begin{theorem}\label{AutOnCara}
	For a given $\varPhi\in Aut(\sDn)$, there is a $\varphi\in Aut(\D)$ and a permutation $\sigma$ of $\{1,\ldots, n\}$ such that $\varPhi(w_1,\ldots,w_n)=(\varphi(w_{\sigma(1)}),\ldots, \varphi(w_{\sigma(n)}))$ for all $(w_1,\ldots, w_n)\in \sDn$.
\end{theorem}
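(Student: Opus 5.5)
Given $\varPhi\in Aut(\sDn)$, the orbit result stated just before the theorem gives $\varPhi(\mathbf{0})=\lambda\cdot\mathbf{1}$ for some $\lambda\in\D$. We take the disc automorphism $\psi(\zeta)=\frac{\zeta-\lambda}{1-\overline{\lambda}\zeta}$, which is of the form considered earlier (with $a=-\lambda$, $\theta=0$), so that $\varPhi_\psi\in Aut(\sDn)$ by \cite{Biswas2025}. Then $\Psi=\varPhi_\psi\circ\varPhi$ is an automorphism of $\sDn$ fixing $\mathbf{0}$. Since $\sDn$ is a bounded, convex, balanced domain (the open unit ball of $\Cnz$), Cartan's linearity theorem shows that $\Psi$ is the restriction of a linear map $T$. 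This map is a surjective linear isometry of $(\C^n,\mu_n)$, because it maps the unit ball onto itself.

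The main step is to show that $T$ is a permutation of the coordinates. Here we use the algebra structure. $\Cnz$ is a unital Banach algebra in which every element of the closed unit ball satisfies the one-variable von Neumann inequality. This follows from the generalized inequality, as above via the isomorphism with $H^\infty(\Omega)/I(\underline{z})$. Arazy's structure theory \cite{Araz94} for surjective isometries of such algebras then gives $T(\uw)=u\cdot\pi(\uw)$. Here $u=T(\mathbf{1})$ is a unitary element, and $\pi$ is an algebra automorphism, possibly composed with a Jordan/anti-automorphism, but $\Cnz$ is commutative so this is just an automorphism.

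An alternative route avoids citing Arazy's theorem in full. $T$ maps $aut(\sDn)$ to itself by conjugation, hence preserves the symmetric part $\Cnzs=\C\cdot\mathbf{1}$ of Lemma~\ref{SymmetricPart}. So $T(\mathbf{1})=c\mathbf{1}$, and since $\mu_n(\mathbf{1})=1$ we get $|c|=1$. One can also see directly that the unitaries of $\Cnz$ are the unimodular constants. Either way, $u=c\mathbf{1}$ with $|c|=1$.

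Next, any algebra automorphism $\pi$ of $\C^n$ with its coordinatewise product permutes the minimal idempotents $e_1,\ldots,e_n$. Hence $\pi(\uw)=(w_{\sigma(1)},\ldots,w_{\sigma(n)})$ for some permutation $\sigma$. If we want to avoid the Jordan-theoretic input here, we can argue through hermitian elements. An isometry maps hermitian elements to hermitian elements after normalizing $T(\mathbf{1})$, which is consistent with $H_{\underline{z}}=\mathbb{R}\mathbf{1}$. Multiplicativity, however, really needs Arazy's theorem, and I expect this to be the main obstacle: one has to check that the hypotheses of \cite{Araz94}, namely the von Neumann inequality for the unit ball, are exactly what the quotient algebra provides.

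Finally, write $T(\uw)=c\,(w_{\sigma(1)},\ldots,w_{\sigma(n)})$. Then
\[
\varPhi=\varPhi_\psi^{-1}\circ\Psi=\varPhi_{\psi^{-1}}\circ\Psi,
\]
so the $j$-th coordinate of $\varPhi(\uw)$ is $\psi^{-1}(c\,w_{\sigma(j)})$. Setting $\varphi(\zeta)=\psi^{-1}(c\zeta)$, which lies in $Aut(\D)$, gives $\varPhi(\uw)=(\varphi(w_{\sigma(1)}),\ldots,\varphi(w_{\sigma(n)}))$, as required.
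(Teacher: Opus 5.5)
Your proposal is correct and follows essentially the same route as the paper. The paper also moves $\varPhi(\mathbf{0})=\lambda\mathbf{1}$ back to the origin with a diagonal M\"obius map, linearizes by Cartan's theorem, uses Arazy's Theorem~3.1 to write the resulting isometry as a unimodular constant times a Jordan automorphism (an algebra automorphism, since $\Cnz$ is commutative), and identifies that automorphism with a coordinate permutation through the idempotents $\underline{e}_j$. Your argument that $T(\mathbf{1})\in\C\cdot\mathbf{1}$ via invariance of the symmetric part, and your explicit reassembly $\varphi(\zeta)=\psi^{-1}(c\zeta)$, only spell out steps the paper leaves to the citation and to its closing ``this is sufficient to conclude the proof.''
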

\begin{proof}
	By Lemma \ref{SymmetricPart}, $\varPhi(\mathbf{0})=(a,\ldots,a)$ for some $a\in \D$. Let $F=\varPhi_a \circ \varPhi$ where $\varPhi_a$ sends $(w_1,\ldots,w_n)$ to $(\frac{a-w_1}{1-\overline{a}w_1},\ldots, \frac{a-w_n}{1-\overline{a}w_n})$. So $F(\mathbf{0})=\mathbf{0}$ and by Proposition 5.6.2 in \cite{GarciaPalacios}, the automorphism $F$ is linear on $\Cnz$. Linearity of $F$ together with the fact that $F(\partial\sDn)=\partial\sDn$ gives that $F$ is a surjective isometry on $\Cnz$. We now use Theorem 3.1 in \cite{Araz94} to conclude
	\begin{enumerate}
		\item $F(\mathbf{1})=e^{i\theta}\mathbf{1}$ for some $\theta\in \mathbb{R}$.
		\item $F(w_1,\ldots,w_n)=e^{i\theta}\psi(w_1,\ldots,w_n)$ for some isometric Jordan isomorphism.
	\end{enumerate}
	Our Jordan isomorphism satisfies the following.
	\begin{enumerate}
		\item Since $\Cnz$ is a commutative Banach algebra, $\psi(\underline{a}\cdot\underline{b})=\psi(\underline{a})\cdot\psi(\underline{b})$. Here the product $"\cdot"$ is componentwise.
		\item $\psi(\mathbf{1})=\mathbf{1}$.
		\end{enumerate}
		Now we consider the idempotent $\underline{e}_j =(e_{j1},\ldots,e_{jn})$ where $e_{jk}=\delta_{jk}$ (the Kronecker delta) and $1\leq j\leq n$. Let $\psi(\underline{e}_j)=\underline{e}^\prime _j$. Then $\underline{e}^\prime _j$ is again an idempotent and we have 
		
		\begin{align}\label{IdempotentUnit}
			\mathbf{1}=\psi(\underline{e} _{1}+\ldots +\underline{e} _{n})=\underline{e}^\prime _{1}+\ldots +\underline{e}^\prime _{n}.
		\end{align}
		Componentwise product implies that the nonzero entries of $\underline{e}^\prime _j$ are $1$. If we now consider an $n\times n$ matrix whose columns are $\underline{e}^\prime _j$, then we see that the matrix is invertible and by (\ref{IdempotentUnit}) the sum of the entries in each row gives $1$. From this, it is easy to conclude that $\{\underline{e}^\prime _j:1\leq j\leq n\}$ and $\{\underline{e} _j:1\leq j\leq n\}$ are the same sets, that is, there is a permutation $\sigma$ of $\{1,\ldots,n\}$ such that $\psi(\underline{e}_j)=\underline{e}_{\sigma^{-1}(j)}$ for all $j$. Linearity of $\psi$ implies $\psi(w_1,\ldots, w_n)=(w_{\sigma(1)},\ldots, w_{\sigma(n)})$ for all $(w_1,\ldots, w_n)$. This is sufficient to conclude the proof.
\end{proof}

Next we consider the case $\Omega=\D^m$ for $m\geq 1$. For any mutually distinct $z_1,\ldots, z_n\in \D^m$ ($n\geq 2$), there is a map $f\in \mathcal{O}(\D^m,\D^m)$ such that $z_1,\ldots, z_n\in Fix(f)=\{w\in \D^m:f(w)=w\}$. For instance, we can always take $f=id_{\D^m}$. It is known that $Fix(f)$ is a holomorphic retract and it is a connected closed submanifold of $\D^m$ (see Corollary 2.5.10 in \cite{Abate89}). 

For $k\in \{1,\ldots,m\}$, we consider a $k$-tuple $I=(i_1,\ldots,i_k)$ where $1\leq i_j\leq m$ and $i_j\neq i_l$ for $j\neq l$. For $w=(w_1,\ldots,w_m)\in \D^m$ we will write $w_I=(w_{i_1},\ldots,w_{i_k})$. We will also write $z_j=(z_{j1},\ldots,z_{jm})$ for $j=1,\ldots,n$ and for any $j$ we will write $z_{j,I}=(z_{ji_1},\ldots,z_{ji_k})$.

Since we are dealing with polydiscs, we know that for each $f\in \mathcal{O}(\D^m,\D^m)$ with $Fix(f)$ containing at least two points, there is a $k\in \{1,\ldots,m\}$, a $k$-tuple $I$ as above and a map $h_f\in \mathcal{O}(\D^k,\D^{m-k})$ such that, upto a permutation of coordinates $Fix(f)=\{(w_I,h_{f}(w_I)):w_I\in \D^k\}$ (see \cite{HeathSuff81}, \cite{KosMcCar20}).

\begin{definition}
	Given mutually distinct $z_1,\ldots, z_n\in \D^m$, we call a $k$-tuple ($1\leq k\leq m$) $I$ an effective tuple for $\{z_1,\ldots,z_n\}$ if there is a map $p\in \mathcal{O}(\D^k,\D^{m-k})$ such that 
	\begin{align}\label{EffectiveTuple}
		z_1,\ldots,z_n\in \{(w_I,p(w_I)):w_I\in \D^k\}.
	\end{align}
	We call $I$ a minimal effective tuple for $\{z_1,\ldots,z_n\}$ if (\ref{EffectiveTuple}) does not hold for any tuple $J$ with the length $|J|$ strictly less than $|I|$.
\end{definition}
Note that for a fixed collection $\{z_1,\ldots,z_n\}$, the minimum of the lengths of the tuples $I$ for which (\ref{EffectiveTuple}) is satisfied is uniquely determined.

Fix a minimal effective tuple $I$ for $\{z_1,\ldots,z_n\}$ and define
\begin{align}
	\mathscr{A}_{\underline{z},I}=\{h\in Aut(\D^{|I|}):h\{z_{j,I}:1\leq j\leq n\}=\{z_{j,I}:1\leq j\leq n\}\}.
\end{align}

For any effective tuple $I$ for $\{z_1,\ldots,z_n\}$ of length $k$, it is easy to see that $\mathscr{D}_{\D^m}(z_1,\ldots,z_n)=\mathscr{D}_{\D^k}(z_{1,I},\ldots,z_{n,I})$. First we describe $\mathscr{A}_{\underline{z},I}$ in terms of Jordan isomorphisms.

\begin{lemma}\label{JordanIsoAndA_zI}
	Let $I$ be a minimal effective tuple for $\{z_1,\ldots,z_n\}\subset \D^m$ and $|I|=k$. Then there is a one-to-one correspondence between the isometric Jordan  isomorphisms of $\Cnz$ and $\mathscr{A}_{\underline{z},I}$.
\end{lemma}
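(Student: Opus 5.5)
From the proof of Theorem \ref{AutOnCara}, every isometric Jordan isomorphism $\psi$ of $\Cnz$ is a coordinate permutation $\psi_\sigma(w_1,\ldots,w_n)=(w_{\sigma(1)},\ldots,w_{\sigma(n)})$ with $\psi_\sigma(\sDn)=\sDn$. Conversely, any permutation preserving $\sDn$ is a linear surjective isometry that is multiplicative for the componentwise product, hence an isometric Jordan isomorphism. So the lemma amounts to a bijection between the permutations $\sigma$ preserving $\sDn$ and $\mathscr{A}_{\underline{z},I}$. Since $\sDn=\mathscr{D}_{\D^k}(z_{1,I},\ldots,z_{n,I})$, I will work on $\D^k$ with nodes $\zeta_j=z_{j,I}$. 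These are mutually distinct: if $\zeta_j=\zeta_l$ for some $j\neq l$, then $z_j=(\zeta_j,p(\zeta_j))=z_l$.

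The map from $\mathscr{A}_{\underline{z},I}$ to permutations is direct. Given $h\in\mathscr{A}_{\underline{z},I}$, define $\sigma$ by $h(\zeta_j)=\zeta_{\sigma(j)}$. For $f\in H^\infty(\D^k)$ with $\|f\|<1$, the function $f\circ h$ has the same norm and $(f\circ h)(\zeta_j)=f(\zeta_{\sigma(j)})$. Hence $\psi_\sigma(\sDn)\subset\sDn$, and applying the same argument to $h^{-1}$ gives equality. Distinct $h$ give distinct $\sigma$, which follows from the rigidity statement below applied to $h_1^{-1}\circ h_2$.

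For the converse I construct $h$ from $\sigma$. Suppose $\psi_\sigma$ preserves $\sDn$. Each coordinate function $\pi_i$ of $\D^k$ restricted to the nodes gives data $(\zeta_{1i},\ldots,\zeta_{ni})$ in $\sDn$ (closure issues are handled by scaling with $r<1$ and a normal-families limit). Then $\psi_\sigma$ of this data is again in $\overline{\sDn}$, so there is $g_i\in H^\infty(\D^k)$ with $\|g_i\|\le1$ and $g_i(\zeta_j)=\zeta_{\sigma(j)i}$. Some $g_i$ may be unimodular constants only if the $i$-th coordinates of all $\zeta_j$ coincide. In that case the $i$-th coordinate could be dropped and $I$ would not be minimal, so $g_i$ maps into $\D$. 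This gives $h=(g_1,\ldots,g_k)\in\mathcal{O}(\D^k,\D^k)$ with $h(\zeta_j)=\zeta_{\sigma(j)}$. Similarly, $\sigma^{-1}$ yields $h'$ with $h'(\zeta_j)=\zeta_{\sigma^{-1}(j)}$. Both $h'\circ h$ and $h\circ h'$ fix every $\zeta_j$.

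The key step, and the main obstacle, is rigidity: any $f\in\mathcal{O}(\D^k,\D^k)$ fixing all $\zeta_j$ is the identity. Then $h'\circ h=h\circ h'=id$, so $h\in Aut(\D^k)$ and $h\in\mathscr{A}_{\underline{z},I}$. The two constructions are mutually inverse because $h$ is determined by $\sigma$.

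To prove rigidity, note that $Fix(f)$ contains $\zeta_1,\zeta_2$ with $n\ge2$. By the Heath--Suffridge description, after reordering coordinates, $Fix(f)=\{(w_J,q(w_J))\}$ for some tuple $J\subset\{1,\ldots,k\}$ of length $l$ and some $q\in\mathcal{O}(\D^l,\D^{k-l})$. If $l<k$, the original nodes satisfy $z_j=(\zeta_j,p(\zeta_j))$ and $\zeta_j=(\zeta_{j,J},q(\zeta_{j,J}))$. So $z_j$ lies on the graph over the sub-tuple of $I$ corresponding to $J$, of the holomorphic map $w\mapsto(q(w),p(w,q(w)))$ suitably ordered, which maps into the appropriate polydisc. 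This contradicts the minimality of $I$. Hence $l=k$ and $Fix(f)=\D^k$, that is, $f=id$. The delicate points are the bookkeeping of coordinate orders in this composition and the boundary/constant cases in the extraction of $g_i$. I would handle the latter by the minimality argument above.
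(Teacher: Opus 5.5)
Your proposal is correct and follows the paper's route. You reduce isometric Jordan isomorphisms to $\sDn$-preserving coordinate permutations, build $h$ (and its candidate inverse from $\sigma^{-1}$) by interpolating the permuted coordinate data, use minimality of $I$ to force any holomorphic self-map of $\D^k$ fixing the projected nodes to be the identity, and use $Aut(\D^k)$-invariance of the body for the converse; you also spell out the Heath--Suffridge rigidity step and the closure and distinctness details that the paper leaves implicit, while your unimodular-constant worry is unnecessary because $g_i(\zeta_1)\in\D$ and the maximum principle already give $g_i(\D^k)\subset\D$.
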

\begin{proof}
	Without loss of generality, we may assume that $I=(1,\ldots,k)$ and write $z_{j,I}=z^\prime _j=(z_{j1},\ldots,z_{j,k})$, $j=1,\ldots, n$. Let $\psi\in Aut(\mathscr{D}_{\D^m}(z_1,\ldots,z_n))$ be an isometric Jordan isomorphism. Then $\psi(\overline{\mathscr{D}}_{\D^m}(z_1,\ldots,z_n))=\overline{\mathscr{D}}_{\D^m}(z_1,\ldots,z_n)$ and there is a permutation $\sigma$ of $\{1,\ldots,n\}$ such that $\psi(w_1,\ldots,w_n)=(w_{\sigma(1)},\ldots,w_{\sigma(n)})$ for all $(w_1,\ldots,w_n)\in\overline{\mathscr{D}}_{\D^m}(z_1,\ldots,z_n)$. Since the coordinate functions have norm one, we have $(z_{1l},\ldots,z_{nl})\in \overline{\mathscr{D}}_{\D^m}(z_1,\ldots,z_n)=\overline{\mathscr{D}}_{\D^k}(z^\prime _1,\ldots,z^\prime _n)$ for all $l=1,\ldots,k$. Consequently, $(z_{\sigma(1)l},\ldots,z_{\sigma(n)l})\in\overline{\mathscr{D}}_{\D^k}(z^\prime _1,\ldots,z^\prime _n)$ for all $l=1,\ldots,k$. For each $l$, we find an $h_l\in\mathcal{O}(\D^k,\D) $ that interpolates the data $z^\prime _j\mapsto z_{\sigma(j)l}$ and construct the map $h=(h_1,\ldots,h_k)\in \mathcal{O}(\D^k,\D^k)$. Note that $h$ maps $z^\prime _j$ to $z^\prime _\sigma(j)$ for $j=1,\ldots,n$. Using $\psi^{-1}$, we can similarly a map $g\in \mathcal{O}(\D^k,\D^k)$ that sends $z^\prime _j$ to $z^\prime _{\sigma^{-1}(j)}$. Hence $z^\prime _1,\ldots,z^\prime _n$ are in $Fix(h\circ g)\cap Fix(g\circ h)$. We now use minimality of $k$ to conclude that $Fix(h\circ g)=Fix(g\circ h)=\D^k$, that is, $g\circ h=h\circ g=id_{\D^k}$. Hence $h\in Aut(\D^k)$ and $h$ maps the set $\{z^\prime _1,\ldots,z^\prime _n\}$ onto itself. Thus, $h\in \mathscr{A}_{\underline{z},I}$. Now suppose the choice of $h_l$ is not unique. Then we may get another map $\tilde{h}$ that maps $z^\prime _j$ to $z^\prime _\sigma(j)$ for $j=1,\ldots,n$ and is in $Aut(\D^k)$. But then we have that $z^\prime _1,\ldots,z^\prime _n$ are in $Fix(h^{-1}\circ \tilde{h})$. Again we invoke minimality of $k$ to conclude that $h=\tilde{h}$. Thus the assignment $\psi\mapsto h$ is injective. To show the surjectivity of this assignment, we pick an $h\in \mathscr{A}_{\underline{z},I}$. This gives a permutation $\sigma$ such that $h(z^\prime _j)=z^\prime _{\sigma(j)}$. Now, $\mathscr{D}_{\D^k}(z^\prime _1,\ldots,z^\prime _n)$ is invariant under the action of $Aut(\D^k)$ and hence $\mathscr{D}_{\D^k}(z^\prime _1,\ldots,z^\prime _n)=\mathscr{D}_{\D^k}(z^\prime _{\sigma(1)},\ldots,z^\prime _{\sigma(n)})$. This implies that $(w_1,\ldots,w_n)\in \mathscr{D}_{\D^k}(z^\prime _1,\ldots,z^\prime _n)$ if and only if $(w_{\sigma(1)},\ldots,w_{\sigma(n)})\in\mathscr{D}_{\D^k}(z^\prime _1,\ldots,z^\prime _n)$. We define 
	
	\begin{align*}
		\psi:\Cnz\rightarrow\Cnz
	\end{align*}
	by $\psi(w_1,\ldots,w_n)=(w_{\sigma(1)},\ldots,w_{\sigma(n)})$. Clearly this is an isometric Jordan isomorphism of $\Cnz$ and $\psi(z_{1l},\ldots,z_{nl})=(h_l (z^\prime _1),\ldots,h_l (z^\prime _n))$ for $l=1,\ldots,k$. 
\end{proof}
\begin{theorem}\label{AutForPolydisc}
	There is a one-to-one correspondence between $Aut(\mathscr{D}_{\D^m}(z_1,\ldots,z_n))$ and $Aut(\D)\times \mathscr{A}_{\underline{z},I}$
	for any fixed minimal effective tuple $I$ for $\{z_1,\ldots,z_n\}$.
\end{theorem}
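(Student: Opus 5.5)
We combine Theorem \ref{AutOnCara} with Lemma \ref{JordanIsoAndA_zI}. Given $\varPhi\in Aut(\mathscr{D}_{\D^m}(z_1,\ldots,z_n))$, Theorem \ref{AutOnCara} writes $\varPhi=\varPsi\circ\varPhi_\varphi$ with $\varphi\in Aut(\D)$ and $\varPsi(w_1,\ldots,w_n)=(w_{\sigma(1)},\ldots,w_{\sigma(n)})$ for a permutation $\sigma$. Since $\varPhi$ and $\varPhi_\varphi$ are automorphisms of $\sDn$, so is $\varPsi=\varPhi\circ\varPhi_\varphi^{-1}$. A linear automorphism of the unit ball of $\Cnz$ is a surjective isometry, and coordinate permutation is multiplicative and unital for the componentwise product. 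Hence $\varPsi$ is an isometric Jordan isomorphism of $\Cnz$. The assignment will be $\varPhi\mapsto(\varphi,h)$, where $h\in\mathscr{A}_{\underline{z},I}$ corresponds to $\varPsi$ under Lemma \ref{JordanIsoAndA_zI}. For the inverse, given $(\varphi,h)$, we take the permutation $\psi$ associated to $h$ in the surjectivity part of that lemma and set $\varPhi=\psi\circ\varPhi_\varphi$. This is an automorphism because $\varPhi_\varphi\in Aut(\sDn)$ by \cite{Biswas2025} and $\psi$ preserves $\sDn$.

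The key step is that the decomposition $\varPhi=\varPsi\circ\varPhi_\varphi$ is unique, so the map $\varPhi\mapsto(\varphi,\varPsi)$ is well defined and injective. Suppose $\varPsi\circ\varPhi_\varphi=\varPsi'\circ\varPhi_{\varphi'}$. Evaluating at $\lambda\mathbf{1}$ gives $\varphi(\lambda)=\varphi'(\lambda)$ for every $\lambda\in\D$, since permutations fix the diagonal. Hence $\varphi=\varphi'$, and composing with $\varPhi_\varphi^{-1}$ gives $\varPsi=\varPsi'$ on $\sDn$. A linear map on $\C^n$ is determined by its values on an open set, so the permutations agree. This argument does not need $\sigma$ itself to be unique; if two permutations induce the same coordinate map they are identified anyway.

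Bijectivity of the whole correspondence then follows from Lemma \ref{JordanIsoAndA_zI}. That lemma gives a bijection between isometric Jordan isomorphisms of $\Cnz$ and $\mathscr{A}_{\underline{z},I}$. By the proof of Theorem \ref{AutOnCara}, every isometric Jordan isomorphism is a coordinate permutation preserving $\sDn$, and conversely every coordinate permutation preserving $\sDn$ is one. So $Aut(\sDn)$ is in bijection with $Aut(\D)$ times the set of admissible permutations, and the latter is in bijection with $\mathscr{A}_{\underline{z},I}$. The two constructions in the first paragraph are inverse to each other by uniqueness of the decomposition.

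The main obstacle is checking that the correspondence in Lemma \ref{JordanIsoAndA_zI} is a genuine bijection. Its surjectivity direction produces $\psi$ from $h$ through the permutation $\sigma$ with $h(z'_j)=z'_{\sigma(j)}$, and we must confirm that this $\psi$ is sent back to $h$. It is, because the injectivity part shows that $h$ is the unique automorphism of $\D^k$ realising $\sigma$ on the projected nodes. If one also wants a group isomorphism onto the direct product, note that $\psi\circ\varPhi_\varphi=\varPhi_\varphi\circ\psi$, since $\varPhi_\varphi$ acts identically on each coordinate. Composition of automorphisms then corresponds to composition in each factor, with a possible order reversal on the permutation side that can be fixed by passing to inverses.
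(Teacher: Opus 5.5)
Your proposal is correct and follows the paper's route: Theorem~\ref{AutOnCara} splits each automorphism into a diagonal disc automorphism and a coordinate permutation preserving the body, and Lemma~\ref{JordanIsoAndA_zI} identifies those permutations with $\mathscr{A}_{\underline{z},I}$. The paper pins down $\varphi$ by noting that disc automorphisms are determined by their values at two points, whereas you evaluate along the diagonal $\lambda\mathbf{1}$; that is the same idea made explicit, and your checks of well-definedness and mutual inverseness fill in what the paper calls ``follows easily''.
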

\begin{proof}
	We see from Theorem \ref{AutOnCara} that the action of $Aut(\D)$ commutes with the action of any Jordan isomorphism of $\Cnz$. Since the elements of $Aut(\D)$ are uniquely determined by their values at two distinct points and $I$ is minimal, the rest of the proof follows easily from Lemma \ref{JordanIsoAndA_zI}.
\end{proof}
\begin{corollary}
The elements of $Aut(\mathscr{D}_\D (z_1,\ldots,z_n))$ are given by  $(w_1,\ldots,w_n)\mapsto (\varphi(w_{\sigma(1)}),\ldots,\varphi(w_{\sigma(n)}))$ where $\varphi\in Aut(\D)$ and $\sigma$ is permutation for which there is a $\psi\in Aut(\D)$ such that $\psi(z_j)=z_{\sigma(j)}$ for all $j$.
\end{corollary}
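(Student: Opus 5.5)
The plan is to specialize Theorem~\ref{AutOnCara} and Lemma~\ref{JordanIsoAndA_zI} to $m=1$. First, by Theorem~\ref{AutOnCara}, every $\varPhi\in Aut(\mathscr{D}_\D(z_1,\ldots,z_n))$ has the form $\varPhi(w_1,\ldots,w_n)=(\varphi(w_{\sigma(1)}),\ldots,\varphi(w_{\sigma(n)}))$ with $\varphi\in Aut(\D)$ and $\sigma$ a permutation. It then remains to identify exactly which $\sigma$ can occur.

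\emph{Reduction.} Since $\varPhi_\varphi$ preserves $\sDn$ by \cite{Biswas2025}, so does $\varPhi_{\varphi^{-1}}$. Composing $\varPhi$ with $\varPhi_{\varphi^{-1}}$ shows that the coordinate permutation $\psi_\sigma(w)=(w_{\sigma(1)},\ldots,w_{\sigma(n)})$ maps $\sDn$ onto itself. Being linear, $\psi_\sigma$ is then a surjective isometry of $\C^n_{\underline{z}}$. It is also multiplicative and unital, so it is an isometric Jordan isomorphism. Conversely, if $\psi_\sigma$ preserves $\sDn$, then $\varPhi_\varphi\circ\psi_\sigma$ lies in $Aut(\sDn)$. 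So the admissible $\sigma$ are exactly those for which $\psi_\sigma$ is an isometric Jordan isomorphism.

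\emph{Identifying the admissible $\sigma$.} For $m=1$ and $n\ge 2$ distinct nodes, the tuple $I=(1)$ is effective, indeed trivially, with no graph coordinates. It is also minimal, since $k\ge1$ is forced. Lemma~\ref{JordanIsoAndA_zI} then applies with $k=1$ and matches $\psi_\sigma$ with an $h\in Aut(\D)$ satisfying $h(z_j)=z_{\sigma(j)}$. Concretely, the identity function has norm one, so $(z_{\sigma(1)},\ldots,z_{\sigma(n)})\in\overline{\mathscr{D}}_\D(\underline z)$. This yields a holomorphic $h:\D\to\overline{\D}$ with $h(z_j)=z_{\sigma(j)}$, and $h$ maps into $\D$ because $h$ is nonconstant, as its values at distinct points differ. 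Doing the same with $\sigma^{-1}$ gives $g$. Then $g\circ h$ fixes $n\ge2$ points of $\D$, so $g\circ h=id$ by the Schwarz lemma, in its form that a selfmap of $\D$ with two fixed points is the identity. The same holds for $h\circ g$, so $h\in Aut(\D)$. Conversely, given $\psi\in Aut(\D)$ with $\psi(z_j)=z_{\sigma(j)}$, composing interpolants with $\psi$ shows that $\mathscr{D}_\D(\underline z)$ is invariant under $\psi_\sigma$.

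The only delicate point is the surjectivity step: the interpolant for the permuted data has norm $\le1$ only in the closed body, so I must rule out a unimodular constant. This is exactly what distinctness of the $z_{\sigma(j)}$ provides, so I expect no real obstacle. Combining the reduction with this identification gives the corollary.
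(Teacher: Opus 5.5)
Your proposal is correct and follows the paper's route. The corollary is the case $m=1$ of Theorem~\ref{AutForPolydisc}: $I=(1)$ is the minimal effective tuple, and Lemma~\ref{JordanIsoAndA_zI} identifies the admissible permutations with the disc automorphisms that permute the nodes. The paper leaves the argument implicit, and you spell it out for $k=1$, using the two-fixed-point Schwarz lemma in place of the minimality argument and the normal-families fact (implicit in both your text and the paper) that data in the closed body have an interpolant of norm at most one.
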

\begin{corollary}
	There is a one-to-one correspondence between $Aut(\mathscr{D}_{\D^2}(z_1,\ldots,z_n))$ and one of the following sets
	
	\begin{enumerate}
		\item $Aut(\D)\times \{\varphi\in Aut(\D^2):\varphi\{z_1,\ldots,z_n\}=\{z_1,\ldots,z_n\}\}$, provided that none of the interpolation problems $z_{1j}\mapsto z_{2j}$ and $z_{2j}\mapsto z_{1j}$ are solvable.
		\item $Aut(\D)\times \{\varphi\in Aut(\D):\varphi\{z_{11},\ldots,z_{n1}\}=\{z_{11},\ldots,z_{n1}\}\}$, provided that the interpolation problem $z_{j1}\mapsto z_{j2}$ is solvable.
		\item $Aut(\D)\times \{\varphi\in Aut(\D):\varphi\{z_{12},\ldots,z_{n2}\}=\{z_{12},\ldots,z_{n2}\}\}$, provided that the interpolation problem $z_{j2}\mapsto z_{j1}$ is solvable.
	\end{enumerate}
\end{corollary}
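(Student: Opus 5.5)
The plan is to deduce the corollary from Theorem \ref{AutForPolydisc} with $m=2$, by identifying a minimal effective tuple $I$ in each case and then reading off $\mathscr{A}_{\underline{z},I}$.

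The effective tuples for $\{z_1,\ldots,z_n\}\subset\D^2$ are $(1)$, $(2)$, $(1,2)$ and $(2,1)$. The tuple $(1,2)$ is always effective, with $p$ the empty map, so the minimal length is $1$ or $2$. The tuple $(1)$ is effective exactly when there is $p\in\mathcal{O}(\D,\D)$ with $p(z_{j1})=z_{j2}$ for all $j$, i.e.\ when the interpolation problem $z_{j1}\mapsto z_{j2}$ is solvable. The same argument with the roles of the coordinates exchanged handles $(2)$. Here the condition in item (1) should be read as: neither of the problems $z_{j1}\mapsto z_{j2}$ and $z_{j2}\mapsto z_{j1}$ is solvable. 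Under that reading the three cases are exhaustive.

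\emph{Case (1).} Neither one-variable problem is solvable. Then the minimal length is $2$, and we take $I=(1,2)$. Then $z_{j,I}=z_j$, so $\mathscr{A}_{\underline{z},I}=\{\varphi\in Aut(\D^2):\varphi\{z_1,\ldots,z_n\}=\{z_1,\ldots,z_n\}\}$. Theorem \ref{AutForPolydisc} gives the claimed bijection.

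\emph{Case (2).} The problem $z_{j1}\mapsto z_{j2}$ is solvable, so $I=(1)$ is a minimal effective tuple. Then $z_{j,I}=z_{j1}$ and $\mathscr{A}_{\underline{z},I}$ is the stated subgroup of $Aut(\D)$. Theorem \ref{AutForPolydisc} applies again. One point needs checking: the points $z_{j1}$ must be distinct, otherwise the projected Pick body is degenerate. This holds because $p(z_{j1})=z_{j2}$, so $z_{j1}=z_{k1}$ would force $z_j=z_k$.

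\emph{Case (3).} This is symmetric to Case (2), with $I=(2)$.

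There is one subtlety when both one-variable problems are solvable, so that cases (2) and (3) hold at once. Both $(1)$ and $(2)$ are then minimal effective tuples. Theorem \ref{AutForPolydisc} is stated for any fixed minimal effective tuple, so either description is valid and the ``one of'' formulation is consistent.

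The main obstacle is confirming that Theorem \ref{AutForPolydisc} really applies in these degenerate one-dimensional situations. In particular, the rigidity step in Lemma \ref{JordanIsoAndA_zI} must hold for $k=1$ with $n\ge 2$ distinct projected nodes in $\D$. Concretely, a holomorphic selfmap of $\D$ fixing two distinct points must be the identity. This follows from the Schwarz--Pick lemma, since a selfmap fixing two points is an isometry of the Poincaré metric, hence an automorphism, and an automorphism with two fixed points in $\D$ is the identity. Once this is granted, the corollary is a direct specialization.
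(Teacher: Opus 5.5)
Your proposal is correct and matches the paper's approach. The paper gives no separate proof: the corollary is meant as Theorem~\ref{AutForPolydisc} with $m=2$, where the minimal effective tuple is $(1,2)$, $(1)$ or $(2)$ according to which of the one-variable problems $z_{j1}\mapsto z_{j2}$, $z_{j2}\mapsto z_{j1}$ is solvable. Your extra checks (the corrected reading of the index typo in item~(1), distinctness of the projected nodes, and the $k=1$ rigidity via Schwarz's lemma) supply details the paper leaves implicit.
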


\vspace{0.1in} \noindent\textbf{Acknowledgements:} The work is supported by European Horizon MSCA grant,\\ CZ.02.01.01/00/22 10/0008854.



\begin{thebibliography}{99}
	
	
		\bibitem{Abate89} M. Abate, Marco,\textit{ Iteration theory of holomorphic maps on taut manifolds}, Res. Lecture Notes Math. Complex Anal. Geom., Mediterranean Press, Rende, 1989.
		
		\bibitem{Araz94} J. Arazy, 
		\textit{Isometries of Banach algebras satisfying the von Neumann inequality}, Math. Scand. 74 (1994), no. 1, $137-151$.
		
		\bibitem{ArazSol90} J. Arazy, B. Solel, Baruch, \textit{Isometries of nonselfadjoint operator algebras}, J. Funct. Anal. 90 (1990), no. 2, $284-305$.
		
		\bibitem{BonDun73} F. F. Bonsall, J. Duncan, \textit{Complete normed algebras}, Ergeb. Math. Grenzgeb. Springer-Verlag, New York-Heidelberg, 1973. 
	
	
	\bibitem{Biswas2025} A. Biswas, \textit{Pick interpolation and invariant functions}, Ann. Polon. Math. 134 (2025), no. 3, $247-263$.

\bibitem{CLW1992} B. Cole, K. Lewis, J.  Wermer, \textit{Pick conditions on a uniform algebra and von Neumann inequalities} J. Funct. Anal. 107 (1992), no. 2, $235-254$

\bibitem{CLW1993} B. Cole, K. Lewis, J.  Wermer, \textit{A characterization of Pick bodies}, J. London Math. Soc. (2) 48 (1993), no. 2, $316-328$. 

\bibitem{CW1993} B. Cole, J. Wermer, \textit{Pick interpolation, von Neumann inequalities, and hyperconvex sets}, Complex potential theory (Montreal, PQ, 1993), NATO Adv. Sci. Inst. Ser. C: Math. Phys. Sci., 439, Kluwer Acad. Publ., Dordrecht, 1994, $89-129$.

\bibitem{CW1996} B. Cole, J. Wermer, \textit{Interpolation in the bidisk} J. Funct. Anal. 140 (1996), no. 1, $194-217$.

\bibitem{CW1997} B. Cole, J. Wermer, \textit{Boundaries of interpolation bodies}, Houston J. Math. 23 (1997), no. 3, $499-527$. 

\bibitem{GarciaPalacios} M. Cabrera Garcia, A. Rodriguez Palacios, \textit{Non-associative normed algebras. Vol. 2.}, Encyclopedia Math. Appl., 167, Cambridge University Press, Cambridge, 2018.

\bibitem{HeathSuff81} L. F. Heath, T. J. Suffridge, \textit{Holomorphic retracts in complex n-space}, Illinois J. Math. 25 (1981), no. 1, $125-135$.

\bibitem{KosMcCar20}  L. Kosinski, J. E. McCarthy, \textit{Extensions of bounded holomorphic functions on the tridisk}, Rev. Mat. Iberoam. 36 (2020), no. 3, $791-816$.

\end{thebibliography}
\end{document}